\newtheorem{lem}{Lemma}[section]
\newtheorem{thm}[lem]{Theorem}
\theoremstyle{definition}
\definecolor{lime}{HTML}{A6CE39}
\DeclareRobustCommand{\orcidicon}{
	\begin{tikzpicture}
		\draw[lime, fill=lime] (0,0)
		circle[radius=0.16]
		node[white]{{\fontfamily{qag}\selectfont \tiny \.{I}D}};
	\end{tikzpicture}
	\hspace{-2mm}}
\xdef\csname orcid\x\endcsname{\noexpand\href{https://orcid.org/\csname orcidauthor\x\endcsname}{\noexpand\orcidicon}}}
\begin{document}
\title{The spectral radius and the distance spectral radius of complements of block graphs
%\thanks{This work is supported by NSFC (No. 11461071).}
 }
\author{Xu Chen$^1$\hspace{-1.5mm}\orcidA{},
Dongjun Fan$^1$,
Rongxiao Shao$^1$,
Guoping Wang$^2$\footnote{Corresponding author. Email: xj.wgp@163.com.}\\
{\small 1. School of Statistics and Data Science, Xinjiang University of Finance and Economics,
}\\
{\small {\"U}r{\"u}mqi, Xinjiang 830012, P.R.China;}\\
{\small 2. School of Mathematical Sciences, Xinjiang Normal University, }\\
{\small {\"U}r{\"u}mqi, Xinjiang 830017, P.R.China}}

\date{}
\maketitle {\bf Abstract.}
In this paper, we determine the graphs whose spectral radius and distance spectral radius attain maximum and minimum among all complements of clique trees.
Furthermore, we also determine the graphs whose spectral radius and distance spectral radius attain minimum and maximum among all complements of block graphs, respectively.

{\flushleft{\bf Key words:}} Spectral radius; Distance spectral radius; Complements; Clique trees; Block graphs.\\
{\flushleft{\bf MR(2020) Subject Classification:}} 05C12, 05C50, 05C69\\

\section{Introduction}

~~~~The {\it adjacency matrix} of $G$ is $A(G)=(a_{ij})_{n\times n}$,
where $a_{ij}=1$ if $v_i$ is adjacent to $v_j$,
and otherwise $a_{ij}=0$.
Since $A( G )$ is a real and symmetric matrix,
its eigenvalues can be arranged as
$\lambda_1(A(G))\ge \lambda_2(A(G))\ge \cdots \ge \lambda_n(A(G))$,
where eigenvalue $\lambda_1(A(G))$ is called the {\it spectral radius}.
Let $d_G( v_i,v_j ) $ be the least distance between $v_i$ and $v_j$ in $G$.
Then the {\it distance matrix} of $G$ is $D( G ) =( d_{ij} ) _{n\times n}$,
where $d_{ij}=d_G( v_i,v_j ) $.
Since $D( G )$ is an non-negative real symmetric matrix,
its eigenvalues can be arranged
$\lambda_1(D(G))\ge \lambda_2(D(G))\ge \cdots \ge \lambda_n(D(G))$,
where eigenvalue $\lambda_1(D(G))$
is called the {\it distance spectral radius}.
The {\it complement} of graph $G=( V( G ) ,E( G ) )$ is denoted by $G^c=( V( G^c ) ,E( G^c ) )$,
where $V( G^c ) =V( G ) $ and $E( G^c ) = \{ xy\notin E(G):x,y\in V( G )\} $.
The spectral radius and distance spectral radius of  complements of graphs have been studied,
see references \cite{F.Y.Z, J.G.S, L.S.C, Y.G.D, L.H.Q.2, Q.R, chenxu, chen.xu}.

Let $G$ be a connected simple graph.
A {\it cut vertex} of a connected graph $G$ is a vertex whose deletion results in a disconnected
graph.
A clique of a graph is a set of mutually adjacent vertices.
A {\it block} of $G$ is a maximal connected subgraph of $G$ that has no cut vertex.
If each block of graph is a clique,
then the graph is called {\it clique tree}.
In this paper, we determine the unique graphs whose spectral radius and distance spectral radius attain maximum and minimum among all complements of clique trees.
Furthermore, we also determine the unique graphs whose spectral radius and distance spectral radius
respectively attain minimum and maximum among all complements of block graphs.

\section {The spectral radius of complements of clique trees}

~~~~~~~Suppose $G$ is a connected simple graph with the vertex set
$V( G ) = \{ v_1,v_2,\cdots ,v_n  \} $.
If two vertices $u$ and $v$ are adjacent, then we write $uv$.
Let $x=( x_1,x_2,\cdots ,x_n ) ^T$, where $x_i$ corresponds to $v_i$, i.e., $x( v_i ) =x_i$
for $i=1,2,\cdots ,n$. Then
\begin{equation}
	x^TA(G)x=2\sum_{v_iv_j\in G}{x_ix_j}
\end{equation}

The neighbor $N_G(v)$ of the vertex $v$
of $G$ is the set of the vertices which are
adjacent to $v$.
Suppose that $x$ is an eigenvector of $A(G)$ corresponding
to the eigenvalue $\lambda$. Then for $v_i\in V (G)$, we have
\begin{equation}
	\lambda x_i=\sum_{v_j\in N_G(v_i)}x_j, ~for~ i=1,2,\cdots ,n.
\end{equation}

Suppose $C_T$ is a clique tree of order $n$ with $=s$ cliques
such that some two cut vertices are not adjacent.
If a clique $K$ of $C_T$ contains exactly one cut vertex $v$,
then we call $K$ the {\it end clique}, and call $v$ the {\it end cut vertex}.
Suppose $\widetilde{v}$ is another cut vertex of $C_T$. Let
\begin{equation}
	\begin{split}
		\widetilde{C}_T=C_T-\{vu|~u\in V(K)\}+\{\widetilde{v}u|~u\in V(K)\setminus{v}\}.
		\nonumber
	\end{split}
\end{equation}
We called the graph $\widetilde{C}_T$ obtained from $C_T$ by moving the clique $K$ from $v$ to $\widetilde{v}$.

We denote by $d(G)$ the diameter of $G$ which is farthest distance between all pairs of vertices. 

\begin{lem}
Let $C_T$ and $\widetilde{C}_T$ be two clique trees of order $n$.
Set $x=( x_1,x_2,\cdots ,x_n ) ^T$ to be a perron vector of $A(C_T^c)$ with respect to $\lambda_1(A(C_T^c))$.
If $x(v)\ge x(\widetilde{v})$, then
$\lambda_1(A(C_T^c))\le \lambda_1(A(\widetilde{C}_T^c))$ with equality if and only if $\widetilde{v}=v$.
\end{lem}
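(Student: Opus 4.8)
The plan is to run a standard Rayleigh‑quotient (edge‑shifting) argument, using the given Perron vector $x$ of $A(C_T^c)$ as a trial vector for $A(\widetilde{C}_T^c)$ and carefully tracking which adjacencies are flipped by the clique move once we pass to complements. First I would note that the standing hypotheses (in particular that two cut vertices are non‑adjacent) guarantee that $C_T^c$ is connected, so $A(C_T^c)$ is nonnegative and irreducible; by the Perron--Frobenius theorem its Perron vector $x$ then has \emph{strictly positive} entries, i.e. $x(w)>0$ for every vertex $w$. Write $W=V(K)\setminus\{v\}$ for the non‑cut vertices of the end clique $K$. Since each $w\in W$ lies only in the block $K$, its $C_T$‑neighbourhood is exactly $V(K)\setminus\{w\}$, and as $\widetilde{v}\notin K$ we have $\widetilde{v}w\notin E(C_T)$.

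Next I would compute the edge differences. In forming $\widetilde{C}_T$ we delete the edges $vw$ and insert the edges $\widetilde{v}w$ for $w\in W$, while all internal edges of $W$ are preserved. Complementation reverses each change: $vw\in\nolinebreak E(\widetilde{C}_T^c)\setminus E(C_T^c)$ and $\widetilde{v}w\in E(C_T^c)\setminus E(\widetilde{C}_T^c)$ for each $w\in W$, and nothing else changes. Hence the matrix $B:=A(\widetilde{C}_T^c)-A(C_T^c)$ has entries $B_{vw}=B_{wv}=1$ and $B_{\widetilde{v}w}=B_{w\widetilde{v}}=-1$ for $w\in W$, and is zero elsewhere. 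Applying the bilinear identity $x^TA(G)x=2\sum_{v_iv_j\in G}x_ix_j$ to these flips gives
\[
x^{T}Bx=2\sum_{w\in W}x(w)\bigl(x(v)-x(\widetilde{v})\bigr).
\]
Then the inequality drops out of the Rayleigh principle: since $\lambda_1(A(\widetilde{C}_T^c))=\max_{y\neq 0} y^{T}A(\widetilde{C}_T^c)y/(y^{T}y)$, taking $y=x$ yields
\[
\lambda_1(A(\widetilde{C}_T^c))\ \ge\ \frac{x^{T}A(\widetilde{C}_T^c)x}{x^{T}x}=\lambda_1(A(C_T^c))+\frac{x^{T}Bx}{x^{T}x},
\]
and the hypothesis $x(v)\ge x(\widetilde{v})$ together with $x(w)>0$ forces $x^{T}Bx\ge 0$, so $\lambda_1(A(C_T^c))\le\lambda_1(A(\widetilde{C}_T^c))$.

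For the equality case, if $\widetilde{v}=v$ the move is trivial, $\widetilde{C}_T=C_T$, and equality is immediate. Conversely, if equality holds then the displayed chain collapses, so $x$ attains the Rayleigh maximum for $A(\widetilde{C}_T^c)$ and is therefore a Perron vector of $A(\widetilde{C}_T^c)$ for the common value $\lambda=\lambda_1(A(C_T^c))$; since $x$ is also an eigenvector of $A(C_T^c)$ for $\lambda$, we get $Bx=0$. But $(Bx)_v=\sum_{w\in W}x(w)>0$ whenever $W\neq\emptyset$, a contradiction, so $W=\emptyset$ and hence $\widetilde{v}=v$. I expect the main obstacle to be precisely this equality analysis, namely turning $Bx=0$ into the conclusion $\widetilde v=v$, and before that the sign bookkeeping of $B$ under complementation; verifying that $C_T^c$ is connected (so that $x$ is strictly positive) is the auxiliary point that makes the contradiction $(Bx)_v>0$ go through.
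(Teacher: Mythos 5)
Your proposal is correct and follows essentially the same route as the paper: it uses the Perron vector $x$ of $A(C_T^c)$ as a Rayleigh trial vector for $A(\widetilde{C}_T^c)$, reduces the comparison to the sign of $x^{T}Bx=2\bigl(x(v)-x(\widetilde{v})\bigr)\sum_{w\in W}x(w)$ where $B=A(\widetilde{C}_T^c)-A(C_T^c)$, and settles the equality case by showing that $Bx=0$ fails at a vertex incident to the moved clique (the paper evaluates at $\widetilde{v}$, you at $v$, which is immaterial). The only quibble is your closing phrase ``so $W=\emptyset$'': a block has at least two vertices, so $W$ is never empty; the contradiction $(Bx)_v>0$ simply rules out the case $\widetilde{v}\ne v$ directly, since for $\widetilde{v}=v$ the matrix $B$ is identically zero and no contradiction arises.
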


\begin{proof}
Since $C_T$ contains two cut vertices which are not adjacent, $d(C_T)>3$, and so $d(\widetilde{C}_T)\ge 3$.
Note that $A(G^c)+A(G)=J_n-I_n$.
Let $J_n$ be the matrix of order $n$ whose all entries are $1$, and let $I_n$ be the identity matrix of order $n$.
Recall $\lambda_1(A(C_T^c))$ is a spectral radius of $C_T^c$.
That is each entry of $x$ is positive.
Since $x(v)\ge x(\widetilde{v})$, by equation (1) we have
$$x^TA(C_T)x=2\sum_{v_iv_j\in C_T}{x_ix_j}\ge 2\sum_{v_iv_j\in \widetilde{C}_T}{x_ix_j}=x^TA(\widetilde{C}_T)x.$$
Then
\begin{equation}
	\begin{split}
		\lambda_1(A(C_T^c))&=x^TA(C_T^c)x\\
&=x^T(J_n-I_n)x-x^TA(C_T)x\\
&\le x^T(J_n-I_n)x-x^TA(\widetilde{C}_T)x\\
&=x^TA(\widetilde{C}_T^c)x
		\nonumber
	\end{split}
\end{equation}
By Rayleigh's theorem we have $x^TA(\widetilde{C}_T^c)x\le \lambda_1(A(\widetilde{C}_T^c))$.
Thus, we have  $\lambda_1(A(C_T^c))\le \lambda_1(A(\widetilde{C}_T^c))$.

We suppose for a contradiction that $\widetilde{v}\ne v$.
Note that $\lambda_1(A(\widetilde{C}_T^c))=\lambda_1(A(C_T^c))$. By equation (2) we have
\begin{equation}
	\begin{split}
		0&=|\lambda_1(A(\widetilde{C}_T^c))x(\widetilde{v})-\lambda_1(A(C_T^c))x(\widetilde{v})|\\
		&= |\big((A(\widetilde{C}_T)-A(C_T))x\big)(\widetilde{v})|.
		\nonumber
	\end{split}
\end{equation}
Whereas $|\big((A(\widetilde{C}_T)-A(C_T)x\big)(\widetilde{v})|=\sum_{u\in K\setminus{\{v\}}}x(u)> 0$.
This contradiction shows that the necessity holds. 
\end{proof}

Suppose $C_T$ contains two nonadjacent cut vertices.
Then $d(C_T)>3$.
Let $w$ and $w'$ be two cut vertices contained in the clique $K$.
Write the clique $K'$ containing $w'$.
Moving all cliques except $K'$ from their cut vertices to $w$ in $C_T$,
we get a graph isomorphic to the clique tree $\mathbb{S}(s-2,1)$ which exactly contains two end cut vertices $w$ and $w'$, and $s-1$ end cliques, as illustrated in Figure 1.
\begin{center}
	\begin{figure}[htbp]
		\centering
		\includegraphics[width=76mm]{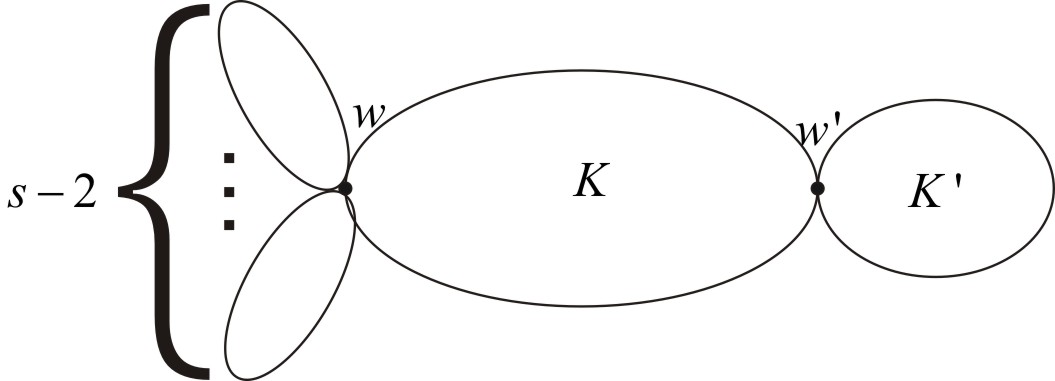}
		\\{{\bf Fig. 1.}  $\mathbb{S}(s-2,1)$.}
	\end{figure}
\end{center}

\begin{thm}
Suppose $C_T$ is a clique tree of order $n$ with $s$ cliques
such that some two cut vertices are not adjacent.
Set $x=( x_1,x_2,\cdots ,x_n ) ^T$ to be a perron vector of $A(C_T^c)$ with respect to $\lambda_1(A(C_T^c))$.
Then $\lambda_1(A(C_T^c))\le \lambda_1(A(\mathbb{S}^c(s-2,1)))$.
\end{thm}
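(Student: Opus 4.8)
The plan is to compare $C_T$ with $\mathbb{S}(s-2,1)$ in a single step, using the given Perron vector $x$ together with the quadratic-form identity behind the preceding lemma, rather than re-optimising a fresh Perron vector after each clique-move. The transformation I will use is exactly the construction described before the statement: retain one clique $K'$ at a second cut vertex $w'$ and push every remaining clique onto a single accumulation vertex $w$.

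First I would choose $w$ to be a cut vertex at which $x$ is smallest among all cut vertices of $C_T$. Since $C_T$ has two nonadjacent cut vertices, $d(C_T)>3$, so $w$ cannot lie only in end cliques; hence $w$ lies in a non-end clique $K$ and shares $K$ with a second cut vertex $w'$, which in turn carries a further clique $K'$. I then build $\mathbb{S}(s-2,1)$ by moving every clique other than $K'$ onto $w$. Each such move takes an end clique away from a cut vertex $v$ with $x(v)\ge x(w)$ and only trades the edges from $v$ to $V(K_v)\setminus\{v\}$ for the edges from $w$ to the same vertices, leaving the internal edges untouched; by equation~(1) it therefore changes $\sum_{v_iv_j}x_ix_j$ by $(x(w)-x(v))\sum_{u}x(u)\le 0$. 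Summing over the edge-disjoint moves gives $x^TA(\mathbb{S}(s-2,1))x\le x^TA(C_T)x$. Combining this with $A(G^c)+A(G)=J_n-I_n$ and Rayleigh's theorem then yields
\[
\lambda_1(A(C_T^c))=x^T(J_n-I_n)x-x^TA(C_T)x\le x^T(J_n-I_n)x-x^TA(\mathbb{S}(s-2,1))x=x^TA(\mathbb{S}^c(s-2,1))x\le\lambda_1(A(\mathbb{S}^c(s-2,1))),
\]
which is the claim. Equivalently, one may perform the moves one at a time and invoke the preceding lemma at each stage, recomputing the Perron vector; the computation above simply bundles all of these moves into one.

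The hard part will be the structural bookkeeping rather than the estimate. I must confirm that the accumulation vertex can be taken to minimise $x$ over the cut vertices while simultaneously sharing a clique $K$ with a cut vertex $w'$ that keeps an end clique $K'$, so that the final graph is genuinely isomorphic to $\mathbb{S}(s-2,1)$, with the two cut vertices $w,w'$ and $s-1$ end cliques, and so that every performed move has its source $v$ satisfying $x(v)\ge x(w)$. The enabling structural fact is that in a clique tree with $d(C_T)>3$ every cut vertex lies in some non-end clique and hence has an adjacent cut vertex; once this is established each factor $(x(w)-x(v))\sum_u x(u)$ has the right sign and the displayed chain of inequalities is routine.
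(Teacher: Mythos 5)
Your proposal is correct and follows essentially the same route as the paper: choose the cut vertex $w$ minimising the Perron entry, move all cliques except $K'$ onto $w$ to form $\mathbb{S}(s-2,1)$, bound $x^TA(\mathbb{S}(s-2,1))x\le x^TA(C_T)x$ via equation (1), and conclude with $A(G^c)=J_n-I_n-A(G)$ and Rayleigh's theorem. The only difference is presentational: you decompose the quadratic-form inequality into per-move contributions $(x(w)-x(v))\sum_u x(u)\le 0$ and flag the structural bookkeeping explicitly, whereas the paper asserts the bundled inequality in one step.
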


\begin{proof} 
Let $x(w)$ be the minimum modulus among all cut vertices of $C_T$.
From the above construction of $\mathbb{S}(s-2,1)$ and the equation (1) we have
$$x^T A(C_T)x=\sum_{v_iv_j\in E(C_T)}x_ix_j\ge \sum_{v_iv_j\in E(\mathbb{S}(s-2,1))}x_ix_j=x^T A(\mathbb{S}(s-2,1))x.$$

Since $C_T$ contains two cut vertices which are not adjacent, $d(C_T)\geq 4$,
and so $d(C_T)>d(\mathbb{S}(s-2,1))=3$.
From Lemma 2.1 we have
\begin{equation}
	\begin{split}
		\lambda_1(A(C_T^c)) &=x^T(J_n-I_n)x-x^TA(C_T)x\\
		&\le x^T(J_n-I_n)x-x^TA(\mathbb{S}(s-2,1))x\\
		&= x^TA(\mathbb{S}^c(s-2,1)) x.
		\nonumber
	\end{split}
\end{equation}
By Rayleigh's theorem we have $\lambda _1(\mathbb{S}^c(s-2,1)) \ge x^TA(\mathbb{S}^c(s-2,1)) x$.
Thus, we have $\lambda_1(A(C_T^c))\le \lambda _1(\mathbb{S}^c(s-2,1))$. \end{proof} 

Let $\mathbb{C_T}_{n,d}$ denote the set of all clique trees of diameter $d$ with $s$ cliques on $n$ vertices. 

\begin{lem}
Let $d\ge 3$. Then
	$$\mathop {\max }\limits_{C_T\in \mathbb{C_T}_{n,d}}\lambda_1(A(C_T^c))\ge \mathop {\max }\limits_{C_T\in \mathbb{C_T}_{n,d+1}}\lambda_1(A(C_T^c)).$$
\end{lem}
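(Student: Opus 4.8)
The plan is to argue by an extremal clique-moving reduction. Fix $d\ge 3$ and let $\widehat{C}_T\in\mathbb{C_T}_{n,d+1}$ attain $\max_{C_T\in\mathbb{C_T}_{n,d+1}}\lambda_1(A(C_T^c))$, and let $x$ be a Perron vector of $A(\widehat{C}_T^c)$. It suffices to produce a clique tree $G\in\mathbb{C_T}_{n,d}$ of the same order $n$ and with the same number $s$ of cliques such that $\lambda_1(A(G^c))\ge\lambda_1(A(\widehat{C}_T^c))$, since then $\max_{C_T\in\mathbb{C_T}_{n,d}}\lambda_1(A(C_T^c))\ge\lambda_1(A(G^c))\ge\max_{C_T\in\mathbb{C_T}_{n,d+1}}\lambda_1(A(C_T^c))$. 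The graph $G$ will be obtained from $\widehat{C}_T$ by a single clique-moving operation of the type in Lemma 2.1, chosen so that the complement spectral radius does not decrease while the diameter drops from $d+1$ to exactly $d$. Since $d+1\ge 4$, $\widehat{C}_T$ has two nonadjacent cut vertices, so Lemma 2.1 and the quadratic-form identity $x^{T}A(G^c)x=x^{T}(J_n-I_n)x-x^{T}A(G)x$ are available.

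First I would fix a diametral path of $\widehat{C}_T$, realized through a sequence of cliques $K_1,K_2,\ldots,K_{d+1}$ with consecutive cliques meeting in cut vertices $c_i=K_i\cap K_{i+1}$, where $K_1$ is an end clique with end cut vertex $c_1$. The operation is to move $K_1$ from $c_1$ to the neighbouring cut vertex $c_2$, that is, to reattach $V(K_1)\setminus\{c_1\}$ at $c_2$. A direct distance computation shows that the relocated end of $K_1$ now reaches the far end of the path only through $c_2$, so the longest path through it has length $d$; using $d\ge 3$ one checks that no branch on the $c_1$ side is lengthened past $d$, so the new diameter equals $d$. One structural caveat must be handled: several diametral paths may share a common centre but have distinct deep ends, in which case relocating a single end clique would leave the diameter at $d+1$. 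I would therefore relocate simultaneously every end clique sitting at maximal depth, which costs nothing in the estimates below and guarantees the new diameter is exactly $d$.

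The heart of the argument is verifying the hypothesis $x(c_1)\ge x(c_2)$ of Lemma 2.1 for the chosen move. Writing $S=\sum_{u}x(u)$ and $\sigma(u)=\sum_{w\in N_{\widehat{C}_T}(u)}x(w)$, equation (2) applied to $A(\widehat{C}_T^c)=J_n-I_n-A(\widehat{C}_T)$ gives $(\lambda_1(A(\widehat{C}_T^c))+1)\,x(u)=S-\sigma(u)$, so that $x(u)$ is a decreasing function of the total $x$-weight of the $\widehat{C}_T$-neighbourhood of $u$. Substituting the neighbourhoods of $c_1$ and $c_2$ and simplifying, the inequality $x(c_1)\ge x(c_2)$ reduces to a comparison between the $x$-weight $\sum_{u\in K_1}x(u)$ of the moved end clique and the $x$-weight of the clique $K_3$ newly separated from $c_2$. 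This is precisely the main obstacle: in the wrong configuration these two weights need not have the right order, because the pendant vertices of an end clique tend to carry large Perron entries. To overcome it, I would exploit the freedom to choose which of the two symmetric ends of the diametral path to contract (the roles of $K_1,c_1$ and $K_{d+1},c_d$ are interchangeable), together with the maximality of $\widehat{C}_T$, to guarantee that at least one admissible end yields the correct sign while still forcing the diameter down to $d$. Once such an end is secured, Lemma 2.1 gives $\lambda_1(A(\widehat{C}_T^c))\le\lambda_1(A(G^c))$, equivalently $x^{T}A(G^c)x\ge x^{T}A(\widehat{C}_T^c)x$ followed by Rayleigh's theorem, which completes the reduction.
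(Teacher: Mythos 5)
Your overall plan --- start from a maximizer in $\mathbb{C_T}_{n,d+1}$, apply the clique-moving operation so that the spectral radius of the complement does not decrease (Lemma 2.1 plus Rayleigh), and force the diameter down to $d$ --- is the same plan as the paper's. The problem is that your write-up stops exactly at the step you yourself call ``the main obstacle'': to move the end clique $K_1$ of a diametral path to the \emph{adjacent} cut vertex $c_2$ you must verify the hypothesis $x(c_1)\ge x(c_2)$ of Lemma 2.1, and you give no proof of it. The assertion that ``at least one admissible end yields the correct sign'' is never argued; nothing you say rules out $x(c_1)<x(c_2)$ and $x(c_d)<x(c_{d-1})$ holding simultaneously, and the maximality of $\widehat{C}_T$, which you invoke as the rescue, is never actually brought to bear. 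As it stands this is a genuine gap at the decisive step, not a deferred routine verification.

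The paper avoids the issue by not insisting that the clique be moved to the neighbouring cut vertex of the diametral path: Lemma 2.1 permits moving an end clique from its end cut vertex $v$ to \emph{any} cut vertex $\widetilde{v}$ with $x(v)\ge x(\widetilde{v})$, so the paper chooses $\widetilde{v}$ to be a cut vertex of smallest Perron entry and repeatedly moves end cliques from larger-weight cut vertices onto it (with a fresh Perron vector at each stage), stopping the first time the diameter drops below $d+1$; since a single move changes the diameter by at most one, the exit value is exactly $d$, and the weight hypothesis of Lemma 2.1 is automatic at every step by the choice of target. (The paper is itself terse about the diameter bookkeeping, but this selection rule is the essential ingredient your version lacks.) To repair your argument, either adopt the ``move toward a minimum-weight cut vertex'' rule, or supply an actual proof of the sign condition at one end of the diametral path. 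A smaller issue: relocating ``simultaneously every end clique sitting at maximal depth'' is several moves at once, whereas Lemma 2.1 is stated for one move using the Perron vector of the pre-move graph, so the simultaneous version needs its own (short) quadratic-form justification.
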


\begin{proof}
Set $x=( x_1,x_2,\cdots ,x_n ) ^T$ to be a perron vector of $D(C_T^c)$ with respect to $\lambda_1(C_T^c))$.
Let $x(v)\ge x(\widetilde{v})$,
and move the clique $K$ from cut vertex $v$ to $\widetilde{v}$, 
and then move some one clique from their cut vertex $v'$ ($x(v')\ge x(\widetilde{v})$) to $\widetilde{v}$ in $C_T$,
we get a graph isomorphic to the clique tree $\widetilde{C}_T$.
Continue the above process in $\widetilde{C}_T$, we until get a graph $C_T'$ whose diameter less than $d$.
By applying Lemma 2.1 we obtain that the maximum modulus of $\lambda_1(A(C_T^c))$ is less than the maximum modulus of $\lambda_1(A(C_T'^c))$.
Thus, the result is clear. \end{proof}

$\mathbb{P}_{n_1,n_2,\cdots ,n_s}$ is called a {\it clique path} \cite{L.H.Q.2} if each edge of the path $P_{s+1}$  of order $s+1$ is replaced by a clique $K^i$
such that $V(K^i)\cap V(K^{i+1})=v_i$ for $i=1,2,\cdots, s-1$ and $V(K^i)\cap V(K^j)=\emptyset$ for $j\ne i-1, i+1$ and $2\le i\le s-1$.
If two graphs $G$ and $H$ are isomorphic, then we
write $G\cong H$.
Applying repeatedly Lemma 2.3 we have the following theorem. 

\begin{thm}
Suppose $C_T$ is a clique tree of order $n$ with $s$ cliques such that some two cut vertices are not adjacent. Then
$$\lambda_1(A(C_T^c))\ge \lambda_1(A(\mathbb{P}^c_{n_1,n_2,\cdots ,n_s}))$$
with equality if and only if $C_T\cong \mathbb{P}_{n_1,n_2,\cdots ,n_s}$.
\end{thm}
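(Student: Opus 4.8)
The plan is to realize the phrase ``applying repeatedly Lemma 2.3'' concretely: starting from an arbitrary clique tree $C_T$ with cliques of sizes $n_1,\ldots,n_s$, I would perform a finite sequence of clique-moving operations, each one lengthening a longest path by a single step and, by Lemma 2.1, not increasing $\lambda_1(A(\cdot^c))$, until the diameter has been pushed up to its maximum value and the graph has become the clique path $\mathbb{P}_{n_1,\ldots,n_s}$. The inequality $\lambda_1(A(C_T^c))\ge\lambda_1(A(\mathbb{P}^c_{n_1,\ldots,n_s}))$ then drops out of the chain, and the equality clause of Lemma 2.1 will handle the characterization of equality.

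First I would record the structural facts on which everything rests. Since a clique-moving operation transfers an \emph{entire} clique from one cut vertex to another, the multiset $\{n_1,\ldots,n_s\}$ of clique sizes is preserved throughout, so it is legitimate to compare $C_T$ with the clique path built from the \emph{same} cliques. The clique path $\mathbb{P}_{n_1,\ldots,n_s}$ realizes the maximum possible diameter $s$ for $s$ cliques, and every clique tree that is not a clique path has strictly smaller diameter; consequently such a $C_T$ contains a cut vertex lying in at least three cliques, i.e.\ a genuine branch point at which a diameter-increasing move can be started.

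Next comes the inductive step on $d=d(C_T)$, carried out upward from $d$ toward $s$. Fixing a Perron vector $x$ of the graph closer to the path, I would select an interior cut vertex $\widetilde v$ carrying a branch clique $K$ together with a peripheral (end) cut vertex $v$ on a longest path, and move $K$ from $\widetilde v$ to $v$ so as to lengthen that path by one. Arranging the roles so that the lengthened graph plays the part of ``$C_T$'' in Lemma 2.1 and checking the weight inequality $x(v)\ge x(\widetilde v)$ there, Lemma 2.1 delivers a new clique tree of diameter $d+1$ whose complement has spectral radius at most $\lambda_1(A(C_T^c))$. Iterating raises the diameter to $s$ and terminates at $\mathbb{P}_{n_1,\ldots,n_s}$, which is precisely the class-by-class comparison packaged by Lemma 2.3. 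For equality I would use that a genuine move has $\widetilde v\ne v$, so by the equality clause of Lemma 2.1 it is strict; hence $\lambda_1(A(C_T^c))=\lambda_1(A(\mathbb{P}^c_{n_1,\ldots,n_s}))$ forces the chain to be empty, that is $C_T\cong\mathbb{P}_{n_1,\ldots,n_s}$.

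The delicate point, and the one I expect to be the main obstacle, is the bookkeeping of the Perron vector along the chain: Lemma 2.1 compares two spectral radii only when the weight inequality $x(v)\ge x(\widetilde v)$ holds on the correct graph, and $x$ changes after every move. The crux is thus to certify that at each stage there is a diameter-increasing move whose two cut vertices satisfy the required inequality. I expect this to follow from the qualitative shape of the Perron vector of the complement---non-cut (leaf) vertices of the clique tree are of high complement-degree and hence heavy, whereas deeply interior cut vertices are light---so that detaching a clique from an interior cut vertex and reattaching it at a more peripheral one is systematically the favorable direction; turning this heuristic into a rigorous choice of $v$ and $\widetilde v$ at every step is where the real work lies.
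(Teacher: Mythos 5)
Your reading of the paper's one-line justification is reasonable, and you have the roles in Lemma 2.1 oriented correctly: in a chain of clique moves descending from $C_T$ to $\mathbb{P}_{n_1,n_2,\cdots,n_s}$, the graph nearer the path must play the part of ``$C_T$'' in that lemma, so the hypothesis $x(v)\ge x(\widetilde v)$ must be checked against the Perron vector of the \emph{post-move} graph. But the proposal stops exactly where the theorem lives. You never establish that at each stage some admissible move exists whose two cut vertices satisfy that inequality; you offer only the heuristic that peripheral vertices are ``heavy'' and interior cut vertices are ``light'' in the Perron vector of the complement, and you concede that making this rigorous ``is where the real work lies.'' Since the move has to be chosen before the post-move graph (hence its Perron vector) is known, this is not bookkeeping: it is the entire content of the argument, and as written not even a single step of the chain is certified. (The paper does no better: its cited tool, Lemma 2.3, compares \emph{maxima} over diameter classes, which cannot bound every individual clique tree from below by the clique path, so the gap you are trying to fill is genuinely present in the source.)

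Two further concrete defects. First, your structural claim that a clique tree which is not a clique path must contain a cut vertex lying in at least three cliques is false: the other obstruction is a block containing three or more cut vertices (e.g.\ a central clique with a pendant clique hanging from each of three of its vertices), and there every cut vertex lies in exactly two cliques, so the branch-point move you prescribe is unavailable. Second, the clique-moving operation of Section 2 is defined only for an \emph{end} clique, i.e.\ one containing exactly one cut vertex, so ``detaching a branch clique from an interior cut vertex'' is not in general a legal move; one must peel off end cliques one at a time, which changes the set of cut vertices along the way and must be tracked. Finally, note that $\mathbb{P}_{n_1,n_2,\cdots,n_s}$ depends on the linear arrangement of the clique sizes; neither you nor the paper specifies which arrangement is intended or shows the bound is independent of it, and a complete proof would have to address this.
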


We denote by $T(n-3,1)$ the tree obtained from the path $P_3$ by appending $n-3$ vertices to some one end of $P_3$.
The following result are two special cases of Theorems 2.2 and 2.4.

\begin{thm}
Suppose $T$ is a tree of order $n$ with $d(T)>3$.
Then we have
$$\lambda_1(A(P_n^c))\le \lambda_1(A(T^c))\le \lambda_1(A(T^c(n-3,1))).$$
The equality holds if and only if $T\cong P_n$ and $T\cong T(n-3,1)$.
\end{thm}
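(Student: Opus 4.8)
The plan is to read off Theorem~2.5 as the special case of Theorems~2.2 and~2.4 in which every block of the clique tree is an edge. Concretely, a tree $T$ of order $n$ is precisely a clique tree each of whose blocks is the clique $K_2$, and its cliques are exactly its $n-1$ edges, so here $s=n-1$. The hypothesis $d(T)>3$ says that $T$ has two cut vertices that are not adjacent, which is exactly the standing assumption of both Theorems~2.2 and~2.4. Thus both theorems apply to $T$, and it remains only to identify the extremal clique trees they produce in the all-$K_2$ setting.

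For the lower bound I would specialize Theorem~2.4. When every clique $K^i$ equals $K_2$, replacing each edge of $P_{s+1}$ by $K^i$ adds no new vertices, so the clique path $\mathbb{P}_{n_1,\dots,n_s}$ collapses to the ordinary path $P_{s+1}=P_n$. Hence Theorem~2.4 gives at once $\lambda_1(A(T^c))\ge \lambda_1(A(P_n^c))$, together with its equality clause $T\cong P_n$. For the upper bound I would specialize Theorem~2.2, whose extremal graph is $\mathbb{S}(s-2,1)=\mathbb{S}(n-3,1)$. The first step is to check $\mathbb{S}(n-3,1)\cong T(n-3,1)$: unwinding the construction of $\mathbb{S}(s-2,1)$ with all blocks edges, the two adjacent end cut vertices $w,w'$ form a single edge, one pendant edge $K'$ remains at $w'$, and the remaining $s-2=n-3$ pendant edges are all attached at $w$; this is exactly $P_3$ (namely a leaf, then $w'$, then $w$) with $n-3$ leaves appended at the end $w$, i.e.\ $T(n-3,1)$. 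Theorem~2.2 then yields $\lambda_1(A(T^c))\le \lambda_1(A(T(n-3,1)^c))$.

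The remaining---and main---difficulty is the equality characterization for the upper bound, since Theorem~2.2 as stated records no equality clause. I would recover it from the equality part of Lemma~2.1. The reduction of Theorem~2.2 builds $\mathbb{S}(n-3,1)$ from $T$ by repeatedly moving cliques to the minimum-weight cut vertex $w$; by Lemma~2.1 each such move is an equality only when the relocated cut vertex already coincides with $w$, that is, when nothing is actually moved. Iterating, equality throughout the chain forces $T$ to coincide with $T(n-3,1)$ from the outset, giving the claimed condition $T\cong T(n-3,1)$.

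The delicate point to verify carefully is that a move in which the transferred vertex $v$ happens to satisfy $x(v)=x(w)$ but $v\neq w$ cannot produce equality in the spectral radius. I would settle this exactly as in the uniqueness argument of Lemma~2.1: assume $\lambda_1(A(T^c))=\lambda_1(A(T(n-3,1)^c))$, let $x$ serve as a common Perron vector on both sides, and evaluate the eigenvalue equation~(2) at $w$ for the two complements; the difference of the two right-hand sides is a strictly positive sum of Perron entries whenever a genuine relocation occurred, contradicting the assumed equality. Once this is in place, combining the lower and upper bounds with their two separate equality conditions gives the full statement of Theorem~2.5.
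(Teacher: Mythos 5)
Your proposal follows exactly the paper's route: view $T$ as a clique tree with $s=n-1$ cliques all equal to $K_2$, and specialize Theorems~2.2 and~2.4, under which $\mathbb{P}_{n_1,\dots,n_s}$ becomes $P_n$ and $\mathbb{S}(s-2,1)=\mathbb{S}(n-3,1)$ becomes $T(n-3,1)$ (the paper's own proof writes $\mathbb{S}(n-2,1)$ here, apparently a typo). Your additional care with the equality characterization for the upper bound --- recovering it from the equality clause of Lemma~2.1, since Theorem~2.2 as stated has none --- addresses a point the paper's one-line proof simply glosses over, and is a worthwhile supplement rather than a departure.
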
 

\begin{proof} Note that $T$ has exactly $n-1$ cliques.
Thus, by Theorems 2.2 and 2.4 we obtain that $\mathbb{P}_{n_1,n_2,\cdot,n_{n-1}}$ and $\mathbb{S}(n-2,1)$ are respectively $P_n$ and $T(n-3,1)$ such that $s=n-1$.
Then the result is clear. \end{proof} 

\section{The spectral radius of the complements of block graphs}

~~~~Let $B$ be a block graph of order $n$ with blocks $B^1, B^2, \cdots, B^s$.
Replacing each block $B^i$ of $B$ by clique $K^i$ of order $|V(K^i)|$, we get a graph isomorphic to the clique tree $C_B$. 
We denote by $\mathbb{B}_{n,d}$ the set of all block graphs of diameter $d$ with $s$ blocks on $n$ vertices.
Set $x=( x_1,x_2,\cdots ,x_n ) ^T$ to be a perron vector of $A(B^c)$ with respect to $\lambda_1(A(B^c))$. 

\begin{lem}
Suppose $B$ is a block graph of order $n$ with $s$ blocks whose diameter $d\ge 3$. Then
$$\mathop {\max }\limits_{B\in \mathbb{B}_{n,d+1}}\lambda_1(A(B^c))\ge \mathop {\max }\limits_{B\in \mathbb{B}_{n,d}}\lambda_1(A(B^c)).$$
\end{lem}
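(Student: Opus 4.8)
The plan is to push the argument of Lemma 2.3 in the opposite direction, exploiting the fact that a block of $B$ need not be complete. Fix a block graph $B$ that attains $\max_{B\in\mathbb{B}_{n,d}}\lambda_1(A(B^c))$ and let $x=(x_1,\dots,x_n)^T$ be a Perron vector of $A(B^c)$; since $d\ge 3$ the complement $B^c$ is connected, so $A(B^c)$ is irreducible and every $x_i>0$. It suffices to exhibit a single block graph $B'\in\mathbb{B}_{n,d+1}$ with the same order $n$ and the same number $s$ of blocks and with $\lambda_1(A(B'^c))\ge\lambda_1(A(B^c))$, for then $\max_{B\in\mathbb{B}_{n,d+1}}\lambda_1(A(B^c))\ge\lambda_1(A(B'^c))\ge\max_{B\in\mathbb{B}_{n,d}}\lambda_1(A(B^c))$.

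The spectral bookkeeping is identical to Section 2. From $A(B^c)=J_n-I_n-A(B)$ and the quadratic form (1),
\[
\lambda_1(A(B^c))=x^TA(B^c)x=x^T(J_n-I_n)x-x^TA(B)x,
\]
so any $B'$ with $x^TA(B')x\le x^TA(B)x$ satisfies $x^TA(B'^c)x\ge\lambda_1(A(B^c))$, whence $\lambda_1(A(B'^c))\ge x^TA(B'^c)x\ge\lambda_1(A(B^c))$ by Rayleigh's theorem. Because $x^TA(B)x=2\sum_{v_iv_j\in E(B)}x_ix_j$ with all $x_i>0$, deleting any edge of $B$ strictly decreases this form; so the whole problem reduces to finding an edge whose deletion keeps $B$ a block graph with $s$ blocks and raises the diameter by exactly one.

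Here the connectedness of $B^c$ does part of the work for free: deleting an edge of $B$ adds an edge to the connected graph $B^c$, so by Perron--Frobenius $\lambda_1(A(B'^c))>\lambda_1(A(B^c))=\max_{B\in\mathbb{B}_{n,d}}\lambda_1(A(B^c))$; this already forces $B'\notin\mathbb{B}_{n,d}$, and since edge deletion never shrinks distances, $d(B')\ge d+1$. To cap the growth at one I would delete an edge of a complete block $K_m$ with $m\ge 4$: its two endpoints retain $m-2\ge 2$ common neighbours, so every distance grows by at most one and $K_m$ minus an edge stays $2$-connected (hence $s$ is unchanged). Combined with $d(B')\ge d+1$ this yields $d(B')=d+1$, i.e. $B'\in\mathbb{B}_{n,d+1}$, as required.

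The main obstacle is that the extremal $B$ maximising the complement spectral radius is precisely the one wanting as few edges as possible, so its blocks tend to be sparse rather than complete; a minimally $2$-connected block (a cycle, say) admits no $2$-connectivity-preserving edge deletion, and deleting an edge from a general block may raise the diameter by more than one and overshoot $\mathbb{B}_{n,d+1}$. Making the reduction rigorous therefore requires either a structural description of the extremal $B$ guaranteeing a deletable complete-block edge, or, failing that, an auxiliary move that relocates an entire block to a more peripheral cut vertex of smaller $x$-value in the spirit of Lemma 2.1, stretching the diameter by exactly one without destroying any block. Pinning down this move, and proving it lands in diameter exactly $d+1$, is where the real work lies; the spectral estimate itself is then routine.
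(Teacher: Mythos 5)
Your spectral estimate is exactly the paper's: its proof of this lemma likewise fixes a Perron vector $x$ of $A(B^c)$, passes to a graph $B'$ obtained by deleting edges inside one block, and runs the chain $\lambda_1(A(B^c))=x^T(J_n-I_n)x-x^TA(B)x\le x^T(J_n-I_n)x-x^TA(B')x=x^TA(B'^c)x\le \lambda_1(A(B'^c))$. So the method matches. The genuine gap is the one you name yourself at the end: you never actually produce a $B'$ lying in $\mathbb{B}_{n,d+1}$, i.e.\ with the same $n$, the same number $s$ of blocks, and diameter exactly $d+1$. Your concrete construction (delete an edge of a complete block $K_m$ with $m\ge 4$) covers the one case where everything works --- and the Perron--Frobenius argument forcing $d(B')\ne d$, combined with the common-neighbour bound $d(B')\le d+1$, is a nice way to pin the diameter --- but the extremal $B$ has no reason to contain such a block. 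Maximising $\lambda_1(A(B^c))$ with $n$ and $s$ fixed pushes $B$ towards as few edges as possible, so its nontrivial blocks are more plausibly cycles or other minimally $2$-connected graphs, for which every edge deletion destroys $2$-connectivity and changes the block count; the fallback move of relocating a block in the spirit of Lemma 2.1 is not worked out either. The proof is therefore incomplete at its combinatorial core.

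You should know, however, that the paper fares no better on this point: its entire construction of $B'$ is the sentence ``Deleting some edges of some one block in $B$, and then connecting the above process, we will get a new block graph $B'$ whose diameter greater than $d$,'' which neither controls the new diameter to be exactly $d+1$ nor verifies that the number of blocks is preserved. So you have correctly reverse-engineered the intended argument and, unlike the paper, you have isolated precisely the step that is unjustified. Closing the gap would require a structural argument that from any (or at least from an extremal) $B\in\mathbb{B}_{n,d}$ one can reach some member of $\mathbb{B}_{n,d+1}$ by deleting edges within blocks; as it stands, neither your write-up nor the paper's establishes this.
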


\begin{proof}
Deleting  some edges of some one block in $B$, and then connecting the above process, we will get a new block graph $B'$ whose diameter greater than $d$.
Set $x=( x_1,x_2,\cdots ,x_n ) ^T$ to be a perron vector of $A(B^c)$ with respect to $\lambda_1(A(B^c))$.
From the equation (1) we have
$$x^TA(B)x=2\sum_{v_iv_j\in B}{x_ix_j}\ge 2\sum_{v_iv_j\in B'}{x_ix_j}=x^TA(B')x.$$
Then
\begin{equation}
	\begin{split}
		\lambda_1(A(B^c)) &=x^T(J_n-I_n)x-x^TA(B)x\\
		&\le x^T(J_n-I_n)x-x^TA(B')x\\
		&= x^TA(B'^c) x.
		\nonumber
	\end{split}
\end{equation}
By Rayleigh's theorem we have $\lambda _1(A(B'^c)) \ge x^TA(B'^c)x$, and so $\lambda_1(A(B^c))\le \lambda_1(A(B'^c))$.
Thus, the result is clear.	
\end{proof}

Connecting  all pairs of vertices of each block which are not adjacent in $B$, and then applying Lemma 3.1, we will get a graph isomorphic to $C_B$.
Then we have the following result. 

\begin{lem}
Let $B$ and $C_B$ be two graphs of order $n$. 
Set $x=( x_1,x_2,\cdots ,x_n ) ^T$ to be a perron vector of $A(B^c)$ with respect to $\lambda_1(A(B^c))$.
If $B$ contains some two cut vertices which do not belong to the same block, then
$$\lambda_1(A(B^c)) \ge \lambda_1(A(C_B^c)).$$
The equality holds if and only if $B\cong C_B$.
\end{lem}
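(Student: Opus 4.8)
The plan is to run the same Rayleigh-quotient comparison used in Lemmas 2.1 and 3.1, built on the identity $A(G^c)=J_n-I_n-A(G)$, but to be careful about which Perron vector drives the estimate. Since $C_B$ is obtained from $B$ by adding to each block all of its missing edges, we have $E(B)\subseteq E(C_B)$, i.e. $C_B^c$ is a spanning subgraph of $B^c$. We are claiming that the \emph{sparser} complement $C_B^c$ has the \emph{smaller} spectral radius, so the chain of inequalities will only close up if it is anchored at a Perron vector of the denser matrix $A(C_B^c)$. Accordingly I would let $y=(y_1,\dots,y_n)^T$ be a Perron vector of $A(C_B^c)$; anchoring instead at a Perron vector of $A(B^c)$ sends the final Rayleigh step the wrong way and does not prove the inequality.

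First I would check that $C_B^c$ is connected, so that $y>0$. Completing blocks to cliques changes neither the block decomposition nor the set of cut vertices, so the hypothesis transfers to $C_B$: it has two distinct cut vertices $u\neq w$. Removing $u$ leaves a component not containing $w$, in which I pick a neighbour $p$ of $u$; removing $w$ leaves a component not containing $u$, in which I pick a neighbour $q$ of $w$. Every $p$--$q$ path must then pass through both $u$ and $w$, whence $d_{C_B}(p,q)=d(p,u)+d(u,w)+d(w,q)\ge 3$. Thus $\mathrm{diam}(C_B)\ge 3$, which guarantees that $C_B^c$ is connected and, by Perron--Frobenius, that $y$ has all positive entries.

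With $y>0$ and $E(B)\subseteq E(C_B)$, equation (1) gives $y^TA(B)y=2\sum_{v_iv_j\in E(B)}y_iy_j\le 2\sum_{v_iv_j\in E(C_B)}y_iy_j=y^TA(C_B)y$. Feeding this into the identity,
$$\lambda_1(A(C_B^c))=y^TA(C_B^c)y=y^T(J_n-I_n)y-y^TA(C_B)y\le y^T(J_n-I_n)y-y^TA(B)y=y^TA(B^c)y,$$
and Rayleigh's theorem then gives $y^TA(B^c)y\le\lambda_1(A(B^c))$, so that $\lambda_1(A(C_B^c))\le\lambda_1(A(B^c))$. For the equality case, the gap in the displayed inequality equals $2\sum_{v_iv_j\in E(C_B)\setminus E(B)}y_iy_j$, which is strictly positive as soon as $E(C_B)\setminus E(B)\neq\emptyset$ precisely because $y>0$. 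Hence equality forces $E(B)=E(C_B)$, i.e. every block of $B$ is already a clique, which is exactly $B\cong C_B$; the converse is immediate. Alternatively, one may add the missing edges one block at a time and invoke at each step the single-graph comparison underlying Lemma 3.1, then chain the resulting inequalities, which is the route suggested by the remark preceding the statement.

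The main obstacle I anticipate is fixing the monotonicity direction correctly: the natural-looking choice of anchoring at a Perron vector of $A(B^c)$ fails, and one must anchor at $A(C_B^c)$ instead. The secondary point requiring care is positivity of the Perron vector, that is, extracting $\mathrm{diam}(C_B)\ge 3$ (equivalently, connectivity of $C_B^c$) from the hypothesis on the two cut vertices, since it is exactly the strict positivity of $y$ that powers the strict inequality in the equality analysis.
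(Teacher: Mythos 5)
Your proof is correct and follows essentially the same Rayleigh-quotient comparison the paper intends (the paper gives no separate proof of this lemma, only the remark that it follows by applying the argument of Lemma 3.1 to the edge additions that complete each block to a clique). Your substantive improvements are the observation that the estimate must be anchored at a Perron vector of $A(C_B^c)$ rather than of $A(B^c)$ as the lemma's own statement declares --- which is what the proof of Lemma 3.1 actually does once the roles of the sparser and denser graph are swapped --- and the explicit verification that $\mathrm{diam}(C_B)\ge 3$, hence $C_B^c$ is connected and its Perron vector strictly positive, which is what makes the equality analysis go through and which the paper leaves implicit.
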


Combining Theorem 2.4 and Lemma 3.2 we have the following result. 

\begin{thm}
Suppose $B$ is a block graph of order $n$ with $s$ blocks such that some two cut vertices do not belong to the same block. Then
$$\lambda_1(A(B^c))\ge \lambda_1(A(\mathbb{P}^c_{n_1,n_2,\cdots ,n_s})).$$
The equality holds if and only if $B\cong \mathbb{P}^c_{n_1,n_2,\cdots ,n_s}$.
\end{thm}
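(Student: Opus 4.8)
The plan is to prove Theorem 3.3 by concatenating the two inequalities already in hand, using the associated clique tree $C_B$ as the bridge between an arbitrary block graph $B$ and the extremal clique path $\mathbb{P}_{n_1,n_2,\cdots,n_s}$. Lemma 3.2 compares $B$ with $C_B$, and Theorem 2.4 compares $C_B$ with $\mathbb{P}_{n_1,n_2,\cdots,n_s}$; since $B$, $C_B$, and $\mathbb{P}_{n_1,n_2,\cdots,n_s}$ all have order $n$ and the same number $s$ of blocks/cliques, these two comparisons should splice together into a single chain.

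First I would apply Lemma 3.2. The hypothesis that $B$ contains two cut vertices not lying in a common block is exactly what Lemma 3.2 requires, so
$$\lambda_1(A(B^c)) \ge \lambda_1(A(C_B^c)),$$
with equality if and only if $B \cong C_B$. Next I would apply Theorem 2.4 to $C_B$, which first demands checking that $C_B$ inherits the needed hypothesis, namely that two of its cut vertices are non-adjacent. Since $C_B$ arises from $B$ by replacing each block $B^i$ with a clique $K^i$ on the same vertex set, the cut vertices of $C_B$ coincide with those of $B$, and two of them share a clique of $C_B$ precisely when they shared a block of $B$. In a clique tree adjacency is equivalent to lying in a common clique, so the two cut vertices that the hypothesis places in distinct blocks of $B$ become two non-adjacent cut vertices of $C_B$; as $C_B$ also has exactly $s$ cliques, Theorem 2.4 yields
$$\lambda_1(A(C_B^c)) \ge \lambda_1(A(\mathbb{P}^c_{n_1,n_2,\cdots,n_s})),$$
with equality if and only if $C_B \cong \mathbb{P}_{n_1,n_2,\cdots,n_s}$. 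Chaining the two displays gives the claimed bound $\lambda_1(A(B^c)) \ge \lambda_1(A(\mathbb{P}^c_{n_1,n_2,\cdots,n_s}))$.

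For the equality statement I would observe that equality in the combined chain forces equality at each stage simultaneously: the first stage then gives $B \cong C_B$ and the second gives $C_B \cong \mathbb{P}_{n_1,n_2,\cdots,n_s}$, so together $B \cong \mathbb{P}_{n_1,n_2,\cdots,n_s}$; conversely this isomorphism clearly attains the bound. The step I expect to need the most care is the hypothesis transfer just described — verifying that \emph{two cut vertices in distinct blocks of} $B$ is genuinely equivalent to \emph{two non-adjacent cut vertices of} $C_B$, and in particular that the block-to-clique replacement neither creates nor destroys cut vertices. Everything else is a direct concatenation of Lemma 3.2 and Theorem 2.4.
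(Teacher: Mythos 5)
Your proposal is correct and is essentially the paper's own argument: the paper proves this theorem precisely by combining Lemma 3.2 (comparing $B$ with $C_B$) and Theorem 2.4 (comparing $C_B$ with $\mathbb{P}_{n_1,n_2,\cdots,n_s}$), exactly as you do. Your explicit verification that $C_B$ inherits the hypothesis of Theorem 2.4 is a detail the paper leaves implicit, but the route is the same.
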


\section{The distance spectral radius of the complements of clique trees}

~~~~Suppose the matrices $A=(a_{ij})_{n\times n}$ and $B=(b_{ij})_{n\times n}$.
Then we write $A=B$ if $a_{ij}=b_{ij}$, and $A\ge B$ if $a_{ij}\ge b_{ij}$.
The below Lemma 4.1 reflects the relationship of $D(G^c)$ and $A(G)$.   

\begin{lem}
[\citenum{chenxu}, Lemma 2.1]
Suppose $G$ is a simple graph on $n$ vertices whose diameter $d(G)$ is greater than two.
Then we have
\begin{enumerate}
\setlength{\parskip}{0ex}
\item[\rm (I.)]
when $d( G ) > 3$, $D( G^c ) =J_n-I_n+A( G )$.
\item[\rm (II.)]
when $d( G ) =3$, $D( G^c )\geq J_n-I_n+A( G )$.
\end{enumerate} 
\end{lem}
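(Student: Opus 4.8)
The plan is to compute $D(G^c)$ entrywise and compare it with the matrix $M:=J_n-I_n+A(G)$. First I would record the entries of $M$: the $(i,i)$ entry is $1-1+0=0$, while for $i\neq j$ the $(i,j)$ entry is $1+a_{ij}$, which equals $1$ when $v_iv_j\notin E(G)$ (equivalently $v_iv_j\in E(G^c)$) and equals $2$ when $v_iv_j\in E(G)$ (equivalently $v_iv_j\notin E(G^c)$). Since $d_{G^c}(v_i,v_i)=0$ and $d_{G^c}(v_i,v_j)=1$ precisely when $v_iv_j\in E(G^c)$, the matrices $D(G^c)$ and $M$ already agree on the diagonal and on every pair adjacent in $G^c$. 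Thus both parts reduce to analyzing $d_{G^c}(v_i,v_j)$ for the pairs with $v_iv_j\in E(G)$, where the corresponding entry of $M$ is $2$.

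For part (I), the key claim is that if $d(G)>3$ then every edge $uv\in E(G)$ admits a common non-neighbor in $G$, that is a vertex $w\neq u,v$ with $w\notin N_G(u)\cup N_G(v)$. I would argue by contradiction: assume the closed neighborhoods $N_G[u]=N_G(u)\cup\{u\}$ and $N_G[v]=N_G(v)\cup\{v\}$ together cover $V(G)$. Choosing $a,b$ with $d_G(a,b)\geq 4$ (possible since $d(G)>3$), these two vertices are non-adjacent and share no common neighbor. Distributing $a$ and $b$ among $N_G[u]$ and $N_G[v]$ and exploiting the edge $uv$ then produces, in every case, a walk from $a$ to $b$ of length at most $3$: for instance $u$ is a common neighbor when both lie in $N_G[u]$, while the mixed case gives the walk $a-u-v-b$. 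This contradicts $d_G(a,b)\geq 4$, so the common non-neighbor $w$ exists. Consequently $w$ is adjacent in $G^c$ to both $u$ and $v$, forcing $d_{G^c}(u,v)=2$; combined with the diagonal and distance-$1$ cases this yields $D(G^c)=M=J_n-I_n+A(G)$.

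For part (II), with $d(G)=3$, I would first invoke the classical fact that $d(G)\geq 3$ forces $G^c$ to be connected, so every $d_{G^c}(v_i,v_j)$ is a finite positive integer. Then for a pair with $v_iv_j\in E(G)$, which is non-adjacent in $G^c$, one has $d_{G^c}(v_i,v_j)\geq 2$, exactly matching the corresponding entry of $M$, while on all remaining pairs $D(G^c)$ and $M$ coincide as already noted. Hence $D(G^c)\geq J_n-I_n+A(G)$ entrywise. The inequality can be strict: the path $P_4$ is self-complementary with $d(P_4)=3$, and its middle edge gives a pair at $G^c$-distance $3$, which is why equality is asserted only in case (I).

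The main obstacle is the existence-of-common-non-neighbor argument in part (I): one must verify that the covering assumption $N_G[u]\cup N_G[v]=V(G)$ together with a far-apart pair $a,b$ always yields a short walk from $a$ to $b$. I would handle this with a clean case split on which closed neighborhood contains $a$ and which contains $b$, treating the boundary cases $a\in\{u,v\}$ or $b\in\{u,v\}$ separately, since these degenerate paths still have length at most $2$ or $3$ and hence still contradict $d_G(a,b)\geq 4$.
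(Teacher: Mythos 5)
Your proof is correct, but there is nothing in this paper to compare it against: the lemma is stated with the citation [\cite{chenxu}, Lemma 2.1] and the paper supplies no proof of its own, so any argument here is necessarily "different" from the paper in the trivial sense that the paper has none. Your route is the standard one and it is sound: the entrywise identification of $J_n-I_n+A(G)$ (diagonal $0$, off-diagonal $1+a_{ij}$), the observation that $D(G^c)$ automatically agrees with this matrix on the diagonal and on pairs adjacent in $G^c$, the common-non-neighbor argument showing $d_{G^c}(u,v)=2$ for every edge $uv\in E(G)$ when $d(G)>3$ (your case split is exactly what is needed: if $N_G[u]\cup N_G[v]=V(G)$, then every pair of vertices is joined by a walk of length at most $3$ through the edge $uv$, forcing $d(G)\le 3$), and the bound $d_{G^c}(u,v)\ge 2$ for pairs adjacent in $G$ when $d(G)=3$. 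In fact your part (I) argument proves the stronger statement that $d(G^c)\le 2$ whenever $d(G)\ge 4$. Two minor remarks. First, the sentence that the diametral pair $a,b$ "share no common neighbor" is never used; the length-at-most-$3$ walk already yields the contradiction. Second, for part (II) you invoke as classical the fact that $d(G)\ge 3$ forces $G^c$ to be connected; this is indeed folklore, but if you want the proof fully self-contained it follows from the same covering technique applied to a pair $a,b$ with $d_G(a,b)=3$. Your $P_4$ example correctly witnesses that the inequality in (II) can be strict, which is precisely why the two cases of the lemma must be kept separate.
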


\begin{lem}
Let $C_T$ and $\widetilde{C}_T$ be two clique trees of order $n$.
Set $x=( x_1,x_2,\cdots ,x_n ) ^T$ to be a perron vector of $D(C_T^c)$ with respect to $\lambda_1(D(C_T^c))$.
If $x(\widetilde{v})\ge x(v)$, then
$\lambda_1(D(C_T^c))\le \lambda_1(D(\widetilde{C}_T^c))$ with equality if and only if $\widetilde{v}=v$.
\end{lem}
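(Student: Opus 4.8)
The plan is to mirror the proof of Lemma 2.1, replacing the identity $A(G^c)=J_n-I_n-A(G)$ by the relation between $D(G^c)$ and $A(G)$ supplied by Lemma 4.1. As in Lemma 2.1, $C_T$ contains two nonadjacent cut vertices, so $d(C_T)>3$ and Lemma 4.1(I) gives the \emph{exact} identity $D(C_T^c)=J_n-I_n+A(C_T)$; moreover moving a clique cannot lower the diameter below $3$, so $d(\widetilde C_T)\ge 3$ and Lemma 4.1 (part (I) or (II)) yields $D(\widetilde C_T^c)\ge J_n-I_n+A(\widetilde C_T)$. The sign in front of $A(G)$ is now positive, which is precisely why the hypothesis here is $x(\widetilde v)\ge x(v)$ rather than $x(v)\ge x(\widetilde v)$.

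First I would record the effect of moving the clique $K$ from $v$ to $\widetilde v$ on the adjacency quadratic form: only the edges joining the cut vertex to $V(K)\setminus\{v\}$ change, so $x^TA(\widetilde C_T)x-x^TA(C_T)x=2\,(x(\widetilde v)-x(v))\sum_{u\in V(K)\setminus\{v\}}x(u)\ge 0$, using the positivity of the entries of the Perron vector $x$ together with $x(\widetilde v)\ge x(v)$. Chaining the exact identity for $C_T$ with the inequality for $\widetilde C_T$ and $x>0$,
\[
\lambda_1(D(C_T^c))=x^T(J_n-I_n)x+x^TA(C_T)x\le x^T(J_n-I_n)x+x^TA(\widetilde C_T)x\le x^TD(\widetilde C_T^c)x,
\]
and Rayleigh's theorem $x^TD(\widetilde C_T^c)x\le\lambda_1(D(\widetilde C_T^c))$ delivers the claimed inequality.

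For the equality case I would argue by contradiction: suppose $\widetilde v\ne v$ yet $\lambda_1(D(\widetilde C_T^c))=\lambda_1(D(C_T^c))=:\lambda$. Then every inequality above is tight; in particular $x$ attains the Rayleigh maximum for $D(\widetilde C_T^c)$, so (the complement being connected, its distance matrix is irreducible and nonnegative) $x$ is also a Perron vector of $D(\widetilde C_T^c)$ with eigenvalue $\lambda$. Writing the eigenvalue equations $\lambda x=D(C_T^c)x$ and $\lambda x=D(\widetilde C_T^c)x$ at coordinate $\widetilde v$ and subtracting gives $0=\big((D(C_T^c)-D(\widetilde C_T^c))x\big)(\widetilde v)$. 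The entrywise bound $D(C_T^c)-D(\widetilde C_T^c)\le(J_n-I_n+A(C_T))-(J_n-I_n+A(\widetilde C_T))=A(C_T)-A(\widetilde C_T)$, valid by Lemma 4.1 whether $d(\widetilde C_T)>3$ or $d(\widetilde C_T)=3$, together with $x>0$ then forces $0=\big((D(C_T^c)-D(\widetilde C_T^c))x\big)(\widetilde v)\le\big((A(C_T)-A(\widetilde C_T))x\big)(\widetilde v)=-\sum_{u\in V(K)\setminus\{v\}}x(u)<0$, a contradiction. Hence $\widetilde v=v$.

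The step I expect to be the main obstacle is the equality analysis when $d(\widetilde C_T)=3$, where Lemma 4.1 supplies only the inequality $D(\widetilde C_T^c)\ge J_n-I_n+A(\widetilde C_T)$ and not an identity. The key point to check is that this inequality points in the favorable direction, so that after subtracting the two eigenvalue equations one still obtains the strict negative value $-\sum_{u}x(u)<0$ at the coordinate $\widetilde v$; one must also confirm that $\widetilde C_T^c$ is connected, so that $D(\widetilde C_T^c)$ is irreducible and the Perron--Frobenius characterization of the Rayleigh maximiser may legitimately be invoked.
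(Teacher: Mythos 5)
Your proof is correct and follows essentially the same route as the paper: both use Lemma 4.1 to trade $D(\cdot^c)$ for $J_n-I_n+A(\cdot)$, evaluate the quadratic-form change $2(x(\widetilde v)-x(v))\sum_{u\in V(K)\setminus\{v\}}x(u)$ from moving the clique, apply Rayleigh's theorem, and derive the equality case by comparing eigenvalue equations at the coordinate $\widetilde v$. If anything, your treatment is more careful than the paper's, which silently treats $D(\widetilde C_T^c)-D(C_T^c)=A(\widetilde C_T)-A(C_T)$ as an identity even when $d(\widetilde C_T)=3$ and does not justify that $x$ is an eigenvector of $D(\widetilde C_T^c)$ before invoking the eigenvalue equation; your entrywise inequality and Rayleigh-tightness argument fill both gaps.
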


\begin{proof}
Since $C_T$ contains two cut vertices which are not adjacent, $d(C_T)>3$, and so $d(\widetilde{C}_T)\ge 3$.
By Lemma 4.1 and equation (1) we have
\begin{equation}
	\begin{split}
		\lambda_1(D(\widetilde{C}_T^c))-\lambda_1(D(C_T^c))&\ge x^T(D(\widetilde{C}_T^c)-D(C_T^c))x\\
		&= x^T(A(\widetilde{C}_T)-A(C_T))x\\
		&=2(x(\widetilde{v})-x(v))\sum_{u\in K\setminus{\{v\}}} x(u)\\
		&\ge 0
		\nonumber
	\end{split}
\end{equation}
Thus, we have  $\lambda_1(D(C_T^c))\le \lambda_1(D(\widetilde{C}_T^c))$.

We suppose for a contradiction that $\widetilde{v}\ne v$.
Note that $\lambda_1(D(\widetilde{C}_T^c))=\lambda_1(D(C_T^c))$. 
From equation (2) we have
\begin{equation}
	\begin{split}
		0&=|\lambda_1(D(\widetilde{C}_T^c))x(\widetilde{v})-\lambda_1(D(C_T^c))x(\widetilde{v})|\\
		&= |\big((A(\widetilde{C}_T)-A(C_T))x\big)(\widetilde{v})|.
		\nonumber
	\end{split}
\end{equation}
Whereas $|\big((A(\widetilde{C}_T)-A(C_T)x\big)(\widetilde{v})|=\sum_{u\in K\setminus{\{v\}}}x(u)> 0$.
This contradiction shows that the necessity holds. 
\end{proof} 

The proof is similar to the proof of Lemma 2.4.
Applying repeatedly Lemma 4.2 we have the following result.

\begin{lem}
Let $d\ge 3$. Then
$$\mathop {\max }\limits_{C_T\in \mathbb{C_T}_{n,d}}\lambda_1(D(C_T^c))\ge 
\mathop {\max }\limits_{C_T\in \mathbb{C_T}_{n,d+1}}\lambda_1(D(C_T^c)).$$
\end{lem}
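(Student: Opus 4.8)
The plan is to adapt the diameter-reduction argument of Lemma 2.3 (the adjacency analogue of the present statement) to the distance setting, using Lemma 4.2 in place of Lemma 2.1 and Lemma 4.1 to pass between $D(C_T^c)$ and $A(C_T)$. First I would fix a clique tree $C_T\in\mathbb{C_T}_{n,d+1}$ attaining $\max_{C_T\in\mathbb{C_T}_{n,d+1}}\lambda_1(D(C_T^c))$, and let $x$ be a Perron vector of $D(C_T^c)$. Since $d+1\ge 4>3$, Lemma 4.1(I) gives $D(C_T^c)=J_n-I_n+A(C_T)$, so $x$ is entrywise positive and the relocation operations below, together with Lemma 4.2, apply verbatim. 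A diameter-realizing pair of vertices determines a longest clique-path running through $d+1$ cliques, whose two extreme cliques are end cliques attached at end cut vertices.

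Next I would let $\widetilde{v}$ be a cut vertex of largest Perron value and pick an end cut vertex $v$ lying at one end of a longest clique-path, so that $x(\widetilde{v})\ge x(v)$. Relocating the end clique $K$ from $v$ to $\widetilde{v}$ produces a clique tree $\widetilde{C}_T$ with the same number $s$ of cliques but with that branch pulled inward, and by Lemma 4.2 we have $\lambda_1(D(C_T^c))\le\lambda_1(D(\widetilde{C}_T^c))$. Repeating this relocation --- always moving an end clique of a current longest path toward a current maximum-Perron cut vertex --- keeps every intermediate graph a clique tree on $s$ cliques and progressively shortens the longest clique-paths, so that after finitely many steps one reaches a clique tree $C_T'\in\mathbb{C_T}_{n,d}$ with $\lambda_1(D(C_T^c))\le\lambda_1(D(C_T'^c))$. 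Combining the inequalities yields $\max_{C_T\in\mathbb{C_T}_{n,d}}\lambda_1(D(C_T^c))\ge\lambda_1(D(C_T'^c))\ge\lambda_1(D(C_T^c))=\max_{C_T\in\mathbb{C_T}_{n,d+1}}\lambda_1(D(C_T^c))$, which is the asserted inequality.

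The main obstacle is the diameter bookkeeping: I must arrange the relocations so that the diameter descends from $d+1$ to exactly $d$ rather than overshooting to a smaller value (an overshoot would only prove $\max$ over a different class and would not close the chain), which requires treating all longest clique-paths and any branching at the target cut vertex simultaneously, and re-examining after each move which cut vertex currently carries the largest Perron entry --- the Perron vector changes at every step, so the hypothesis $x(\widetilde{v})\ge x(v)$ of Lemma 4.2 must be re-verified rather than assumed to persist. A minor point that nonetheless needs noting, but costs nothing, is the boundary case $d=3$: the target $C_T'$ then has diameter $3$, where only Lemma 4.1(II), $D(G^c)\ge J_n-I_n+A(G)$, is available; however this is precisely the inequality already used inside the proof of Lemma 4.2, so the chain $\lambda_1(D(\widetilde{C}_T^c))-\lambda_1(D(C_T^c))\ge x^T(A(\widetilde{C}_T)-A(C_T))x\ge 0$ remains valid and no additional argument is required.
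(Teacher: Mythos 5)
Your proposal takes essentially the same route as the paper: the paper's entire proof of this lemma is the remark that it follows by repeatedly applying Lemma 4.2 in the manner of Lemma 2.3, which is exactly the clique-relocation scheme you describe (move an end clique of a longest clique-path to a cut vertex of larger Perron entry, so that $\lambda_1(D(\cdot^c))$ does not decrease, until the diameter drops from $d+1$ to $d$). The bookkeeping issues you flag --- landing on diameter exactly $d$ rather than overshooting, re-verifying the hypothesis $x(\widetilde{v})\ge x(v)$ after each move since the Perron vector changes, and the $d=3$ boundary where only Lemma 4.1(II) is available --- are genuine but are left entirely unaddressed in the paper's one-line argument, so your version is if anything the more careful of the two.
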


Applying repeatedly Lemma 4.3 we have the following result.

\begin{lem}
Suppose $C_T$ is a clique tree of order $n$ with $s$ cliques such that some two cut vertices are not adjacent. 
Then $$\lambda_1(D(C_T^c))\ge \lambda_1(D(\mathbb{P}^c_{n_1,n_2,\cdots ,n_s}))$$
with equality if and only if $C_T\cong \mathbb{P}_{n_1,n_2,\cdots ,n_s}$.
\end{lem}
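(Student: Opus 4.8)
The plan is to deduce Lemma 4.4 from a monotone chain of the diameter-comparison estimate of Lemma 4.3, in exact parallel with how Theorem 2.4 is obtained from Lemma 2.3 in the adjacency setting. First I would record the structural fact that, among all clique trees on $n$ vertices with $s$ cliques, the clique path $\mathbb{P}_{n_1,n_2,\cdots ,n_s}$ is the one of largest diameter $s$: all cliques lie on a single path, so no cut vertex branches off a third clique. Hence any clique tree $C_T$ meeting the hypothesis has diameter $d$ with $4\le d\le s$, and $d=s$ already forces $C_T\cong \mathbb{P}_{n_1,n_2,\cdots ,n_s}$.

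Next I would treat the clique-moving operation of Lemma 4.2 as the atomic step and iterate its one-step content (which is what underlies Lemma 4.3). Given $C_T$ with diameter $d<s$, some cut vertex carries a clique that can be detached and reattached so as to lengthen a longest path, raising the diameter from $d$ to $d+1$ while preserving the multiset of clique sizes. I would orient this move so that the clique is transported toward the low end of the Perron vector $x$ of $D(C_T^c)$: reading the move as passing from the longer-diameter graph back to the shorter one and applying Lemma 4.2 with the Perron vector of the shorter graph, the hypothesis $x(\widetilde v)\ge x(v)$ is satisfied, yielding $\lambda_1(D(C_{T,d+1}^c))\le \lambda_1(D(C_{T,d}^c))$. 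Because the diameter strictly increases at each step and is capped at $s$, the process terminates after finitely many steps at a graph of diameter $s$, namely a clique path with the prescribed clique sizes.

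Telescoping these inequalities from $d(C_T)$ up to $s$ then gives
\[\lambda_1(D(C_T^c))\ge \lambda_1(D(\mathbb{P}^c_{n_1,n_2,\cdots ,n_s})).\]
For the equality statement I would invoke the equality clause of Lemma 4.2: every genuine move ($\widetilde v\ne v$) is strict, so the chain degenerates to an equality only when no move is needed, i.e. when $C_T$ already has diameter $s$ and hence $C_T\cong \mathbb{P}_{n_1,n_2,\cdots ,n_s}$.

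The main obstacle I anticipate is the bookkeeping that makes each step legitimate: at every stage one must exhibit a clique-move that simultaneously strictly increases the diameter and can be oriented against $x$ so that Lemma 4.2 produces a \emph{decrease} rather than an increase, and one must ensure the intermediate graphs keep diameter greater than $3$, so that the identity $D(G^c)=J_n-I_n+A(G)$ of Lemma 4.1(I), on which Lemma 4.2 rests, stays in force throughout (the boundary case of diameter exactly $3$, where only the inequality of Lemma 4.1(II) is available, does not intervene since $d(C_T)\ge 4$ and the reduction only enlarges the diameter). A secondary point to keep honest is that clique-moving preserves only the multiset of clique sizes, so a priori the reduction could end at a clique path whose sizes occur in a different order; pinning the extremal arrangement down to the one denoted $\mathbb{P}_{n_1,n_2,\cdots ,n_s}$, and thereby the precise $\emph{iff}$ in the equality characterization, requires either that all such clique paths share the same value of $\lambda_1(D(\cdot^c))$ or a final comparison among them.
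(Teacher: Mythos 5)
Your proposal matches the paper's argument: the paper ``proves'' this lemma with the single remark that it follows by repeatedly applying Lemma 4.3 (the diameter-monotonicity step, itself resting on the clique-moving Lemma 4.2), which is exactly the telescoping chain from $d(C_T)$ up to diameter $s$ that you describe, with the equality case read off from the strictness clause of Lemma 4.2. The caveats you flag --- orienting each move against the Perron vector of a graph that is only being constructed, keeping intermediate diameters above $3$, and pinning down the order of the clique sizes along the resulting clique path --- are genuine, but the paper leaves every one of them unaddressed as well, so your write-up is, if anything, more explicit than the source.
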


\begin{thm}
Suppose $C_T$ is a clique tree of order $n$ with $s$ cliques
such that some two cut vertices are not adjacent.
Set $x=( x_1,x_2,\cdots ,x_n ) ^T$ to be a perron vector of $D(C_T^c)$ with respect to $\lambda_1(D(C_T^c))$.
Then $\lambda_1(D(C_T^c))\le \lambda_1(D(\mathbb{S}^c(s-2,1)))$.
\end{thm}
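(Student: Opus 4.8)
The plan is to run the proof of Theorem 2.2 verbatim, but with the complement identity $A(G^c)+A(G)=J_n-I_n$ replaced by Lemma 4.1, which is what controls $D(G^c)$. The decisive subtlety is a change of sign: Lemma 4.1(I) reads $D(C_T^c)=J_n-I_n+A(C_T)$ with a plus rather than a minus, so where Theorem 2.2 concentrates the cliques at the cut vertex of \emph{minimum} weight, here I must concentrate them at the cut vertex of \emph{maximum} weight. This is precisely the direction prescribed by Lemma 4.2, so the extremal graph remains $\mathbb{S}(s-2,1)$ even though the internal mechanism is reversed.

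First I would record the two structural facts on which everything rests. Since $C_T$ has two nonadjacent cut vertices, $d(C_T)>3$, so Lemma 4.1(I) gives $D(C_T^c)=J_n-I_n+A(C_T)$, whence, with $x$ the positive unit Perron vector of $D(C_T^c)$,
\[
\lambda_1(D(C_T^c))=x^TD(C_T^c)x=x^T(J_n-I_n)x+x^TA(C_T)x.
\]
On the other side $d(\mathbb{S}(s-2,1))=3$, so Lemma 4.1(II) yields only the entrywise bound $D(\mathbb{S}^c(s-2,1))\ge J_n-I_n+A(\mathbb{S}(s-2,1))$; because every entry of $x$ is positive this upgrades to the quadratic-form inequality
\[
x^TD(\mathbb{S}^c(s-2,1))x\ge x^T(J_n-I_n)x+x^TA(\mathbb{S}(s-2,1))x.
\]

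Next I would perform the clique-moving step in the reversed direction. I let $x(w)$ be the \emph{maximum} modulus among all cut vertices of $C_T$; since $d(C_T)>3$, this $w$ lies in a clique $K$ together with a second cut vertex $w'$ (were this not so, $w$ would be the only cut vertex and $d(C_T)=2$). Applying the construction preceding Figure 1 — moving every clique except $K'$ onto $w$ — produces a graph isomorphic to $\mathbb{S}(s-2,1)$, and since each move transfers a clique from a cut vertex $v$ with $x(v)\le x(w)$ to $w$, equation (1) shows the quadratic form cannot decrease, so $x^TA(\mathbb{S}(s-2,1))x\ge x^TA(C_T)x$. Chaining the three displays and closing with Rayleigh's theorem $\lambda_1(D(\mathbb{S}^c(s-2,1)))\ge x^TD(\mathbb{S}^c(s-2,1))x$ then gives
\[
\lambda_1(D(C_T^c))=x^T(J_n-I_n)x+x^TA(C_T)x\le x^T(J_n-I_n)x+x^TA(\mathbb{S}(s-2,1))x\le \lambda_1(D(\mathbb{S}^c(s-2,1))),
\]
which is the assertion.

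The hard part will be keeping the inequality directions aligned, precisely because the target graph $\mathbb{S}(s-2,1)$ is identical to the one in the adjacency case while the driving inequality is opposite: the plus sign in Lemma 4.1(I) means a larger $x^TA(C_T)x$ now \emph{raises} $\lambda_1(D(C_T^c))$, which is what forces the move toward the maximum-weight cut vertex rather than the minimum. I would also take care that the comparison graph sits in the $d=3$ regime, where only the weaker entrywise bound of Lemma 4.1(II) holds; this causes no trouble only because the strict positivity of the Perron vector $x$ is exactly what converts that entrywise inequality into the needed inequality of quadratic forms.
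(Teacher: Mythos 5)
Your proposal is correct and follows essentially the same route as the paper: move every clique onto the cut vertex $w$ of maximum Perron weight to obtain $\mathbb{S}(s-2,1)$, use Lemma 4.1 to convert the resulting adjacency quadratic-form inequality into one for the distance matrices of the complements, and close with Rayleigh's theorem. The one place you are actually more careful than the paper is the final step: since $d(\mathbb{S}(s-2,1))=3$, Lemma 4.1(II) gives only $D(\mathbb{S}^c(s-2,1))\ge J_n-I_n+A(\mathbb{S}(s-2,1))$, so the last relation is an inequality $x^T(J_n-I_n)x+x^TA(\mathbb{S}(s-2,1))x\le x^TD(\mathbb{S}^c(s-2,1))x$ (upgraded from the entrywise bound by positivity of $x$), whereas the paper writes it as an equality; your version is the correct reading and the conclusion is unaffected.
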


\begin{proof} Let $x(w)$ be the maximum modulus among all cut vertices of $C_T$.
From the above construction of $\mathbb{S}(s-2,1)$ and the equation (1) we have
$$x^T A(C_T)x=\sum_{v_iv_j\in E(C_T)}x_ix_j\le \sum_{v_iv_j\in E(\mathbb{S}(s-2,1))}x_ix_j=x^T A(\mathbb{S}(s-2,1))x.$$

Since $C_T$ contains two cut vertices which are not adjacent, $d(C_T)\geq 4$,
and so $d(C_T)>d(\mathbb{S}(s-2,1))=3$.
From Lemma 4.1 we have
\begin{equation}
	\begin{split}
		\lambda_1(D(C_T^c)) &=x^TD(C_T^c)x\\
		&=x^T( J_n-I_n ) x+x^TA(C_T) x\\
		&\le x^T( J_n-I_n ) x+x^TA(\mathbb{S}(s-2,1))x\\
		&= x^TD(\mathbb{S}^c(s-2,1)) x.
		\nonumber
	\end{split}
\end{equation}
By Rayleigh's theorem we have $\lambda _1(\mathbb{S}^c(s-2,1)) \ge x^TD(\mathbb{S}^c(s-2,1)) x$.
Thus, $\lambda_1(D(C_T^c))\le \lambda _1(\mathbb{S}^c(s-2,1))$.
\end{proof} 

The proof is similar to the proof of Lemma 2.5, and the following result are two special cases of Theorems 4.4 and 4.5. 

\begin{thm}
Suppose $T$ is a tree of order $n$ with $d(T)>3$.
Then we have
$$\lambda_1(D(P^c_n))\le \lambda_1(D(T^c))\le \lambda_1(D(T^c(n-3,1))).$$
The equality holds if and only if $T\cong P_n$ and $T\cong T(n-3,1)$.
\end{thm}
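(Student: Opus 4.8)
The plan is to read off Theorem 4.6 as the specialization of Lemma 4.4 and Theorem 4.5 to the case where every block is a single edge, exactly as Lemma 2.5 was derived from Theorems 2.2 and 2.4. The only genuine work beyond quoting those results is supplying the equality characterization in the upper bound, since Theorem 4.5 is stated as a bare inequality.

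First I would observe that a tree $T$ on $n$ vertices is precisely a clique tree every one of whose cliques is a copy of $K_2$; hence its number of cliques is $s=n-1$. Since $d(T)>3$, a diametral path $v_0v_1\cdots v_d$ has length $d\ge 4$, so its internal vertices $v_1$ and $v_3$ are cut vertices at distance two and therefore nonadjacent. This confirms the hypothesis ``some two cut vertices are not adjacent'' demanded by Lemma 4.4 and Theorem 4.5, so both apply to $T$.

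Next I would specialize the two extremal clique trees. Forcing every clique to be an edge and setting $s=n-1$ turns the clique path $\mathbb{P}_{n_1,n_2,\cdots ,n_s}$ into $P_n$, and turns the clique star $\mathbb{S}(s-2,1)$ into the tree $T(n-3,1)$ described just before the statement (a central vertex $w$ carrying $n-3$ pendant edges, joined to $w'$, which in turn carries one further pendant edge). Lemma 4.4 then yields the lower bound $\lambda_1(D(T^c))\ge \lambda_1(D(P_n^c))$ together with its equality clause $T\cong P_n$, while Theorem 4.5 yields $\lambda_1(D(T^c))\le \lambda_1(D(T^c(n-3,1)))$.

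The main obstacle I anticipate is the equality case of the upper bound, which Theorem 4.6 asserts holds iff $T\cong T(n-3,1)$ but which Theorem 4.5 does not address. To supply it I would revisit the proof of Theorem 4.5 and track tightness of its two inequalities. Equality in the edge-sum step $x^TA(C_T)x\le x^TA(\mathbb{S}(s-2,1))x$, combined with the strict positivity of the Perron vector $x$ of $D(C_T^c)$, should force that no clique can be relocated, i.e.\ that $T$ already realizes the $T(n-3,1)$ configuration; equality in the subsequent Rayleigh step forces $x$ to be a Perron vector of $D(T^c(n-3,1))$ as well. A subtlety is that $T(n-3,1)$ has diameter exactly $3$, so Lemma 4.1 supplies only the inequality $D(G^c)\ge J_n-I_n+A(G)$ there; I would therefore verify that the comparison remains strict off the broom configuration by examining the affected distance entries directly, rather than relying on the clean identity available when $d(G)>3$.
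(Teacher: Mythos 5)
Your proposal follows the same route as the paper: it treats $T$ as a clique tree with $s=n-1$ cliques, each a copy of $K_2$, and specializes Lemma 4.4 and Theorem 4.5, under which $\mathbb{P}_{n_1,n_2,\cdots,n_s}$ becomes $P_n$ and $\mathbb{S}(s-2,1)$ becomes $T(n-3,1)$. Your extra attention to the equality case of the upper bound is warranted rather than redundant: the paper's own proof (a one-line specialization modeled on the adjacency version) never supplies that characterization, since Theorem 4.5 is stated as a bare inequality, so the tightness analysis you sketch --- including the caveat that $d(T(n-3,1))=3$ forces one to work with the inequality case of Lemma 4.1 --- fills a gap the paper leaves open.
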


\section{The distance spectral radius of the complements of block graphs}

\begin{lem}
Let $B$ and $C_B$ be two graphs of order $n$. 
Set $x=( x_1,x_2,\cdots ,x_n ) ^T$ to be a perron vector of $D(B^c)$ with respect to $\lambda_1(D(B^c))$.
If $B$ contains some two cut vertices which do not belong to the same block, then
$$\lambda_1(D(B^c)) \le \lambda_1(D(C_B^c)).$$
The equality holds if and only if $B\cong C_B$.
\end{lem}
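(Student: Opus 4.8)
The plan is to mirror the argument used for the adjacency counterpart, Lemma 3.2, but to run it through Lemma 4.1 so that the comparison of the two distance matrices collapses to a transparent entrywise comparison of adjacency matrices. The first thing I would record is the structural relationship between $B$ and $C_B$: since $C_B$ is obtained from $B$ by adjoining, inside every block, all the edges missing from the corresponding clique, the two graphs share a vertex set and satisfy $E(B)\subseteq E(C_B)$, hence $A(C_B)\ge A(B)$ entrywise.

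The one genuine step before the algebra is a diameter check, which is exactly what licenses the clean part (I) of Lemma 4.1 instead of the weaker inequality in part (II). Because $B$ has two cut vertices $u$ and $w$ lying in different blocks, they are non-adjacent in the clique tree $C_B$. Picking a vertex $a\neq u$ in a block at $u$ on the far side from $w$, and a vertex $b\neq w$ in a block at $w$ on the far side from $u$, every $a$--$b$ path must pass through $u$ and then $w$, so $d_{C_B}(a,b)=d(a,u)+d_{C_B}(u,w)+d(w,b)\ge 1+2+1=4$. Thus $d(C_B)\ge 4>3$, and since adjoining edges only shortens distances, $d(B)\ge d(C_B)>3$ as well. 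Both graphs therefore fall under Lemma 4.1(I), which yields
\[
D(B^c)=J_n-I_n+A(B),\qquad D(C_B^c)=J_n-I_n+A(C_B).
\]

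Subtracting and using $A(C_B)\ge A(B)$ gives $D(C_B^c)-D(B^c)=A(C_B)-A(B)\ge 0$ entrywise, so $D(C_B^c)$ and $D(B^c)$ are non-negative symmetric matrices with $D(C_B^c)\ge D(B^c)$. Taking $x$ to be the (nonnegative) Perron vector of $D(B^c)$, Rayleigh's theorem then produces the chain
\[
\lambda_1(D(C_B^c))\ge x^{T}D(C_B^c)x\ge x^{T}D(B^c)x=\lambda_1(D(B^c)),
\]
where the middle inequality uses $x\ge 0$ together with the entrywise bound. This is the asserted inequality.

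For the equality case I would trace the chain backwards. Equality forces $x^{T}\big(A(C_B)-A(B)\big)x=0$; since $B^c$ is connected (diameter $d(B)>3$ guarantees this), the distance matrix $D(B^c)$ is irreducible and its Perron vector satisfies $x>0$, so together with $A(C_B)-A(B)\ge 0$ every entry of $A(C_B)-A(B)$ must vanish, i.e.\ $A(C_B)=A(B)$ and hence $B\cong C_B$; the converse is immediate. The only delicate point is the diameter bookkeeping at the outset, since the entire reduction depends on being in case (I) of Lemma 4.1 for \emph{both} $B$ and $C_B$; the strict positivity of the Perron vector, which I need only for the equality discussion, is a free consequence of the connectedness of $B^c$.
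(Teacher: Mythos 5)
Your proposal is correct and follows essentially the same route as the paper: both arguments compare $x^{T}A(B)x$ with $x^{T}A(C_B)x$, pass through Lemma 4.1 using $d(B)\ge d(C_B)>3$, apply Rayleigh's theorem, and settle the equality case by noting that $E(C_B)\setminus E(B)\neq\emptyset$ would force a strictly positive Rayleigh-quotient gap. Your write-up is in fact slightly more careful than the paper's on two points it leaves implicit --- the verification that $d(C_B)\ge 4$ and the positivity of the Perron vector via irreducibility of $D(B^c)$ --- but these are refinements of the same argument, not a different one.
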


\begin{proof} 
Connecting all pairs of vertices in $B^i$ ($i=1,2,\cdots ,n$) which are not adjacent in $B$, we get a graph isomorphic to $C_B$.
Obviously, from equation (1) we have $x^T A(B)x=\sum_{v_iv_j\in E(B)}x_ix_j\le \sum_{v_iv_j\in E(C_B)}x_ix_j=x^T A(C_B)x$.

Since $B$ contains some two cut vertices which do not belong to the same block, we have $d(B)\ge d(C_B)>3$.  From Lemma 4.1 we have
\begin{equation}
	\begin{split}
		\lambda_1(D(B^c)) &=x^TD(B^c)x\\
		&=x^T( J_n-I_n ) x+x^TA(B) x\\
		&\le x^T( J_n-I_n ) x+x^TA(C_B)x\\
		&\le x^TD(C_B^c) x.
		\nonumber
	\end{split}
\end{equation}
By Rayleigh's theorem we have $\lambda _1(C_B^c) \ge x^TD(C_B^c) x$.
Then $\lambda_1(D(B^c))\le \lambda_1(D(C_B^c))$.

Suppose for a contradiction that $B\not\cong C_B$.
Note that $\lambda_1(D(B^c))=\lambda_1(D(C_B^c))$.  Then we have
\begin{equation}
	\begin{split}
		0&=\lambda_1(D(C_B^c))-\lambda_1(D(B^c))\\
		&= x^T(A(C_B)-A(B))x\\
		&=\sum_{v_iv_j\in (E(C_B)-E(B))}x_ix_j.
		\nonumber
	\end{split}
\end{equation}
By hypothesis we have  $E(C_B)-E(B)\ne \emptyset$, and so $\sum_{v_iv_j\in (E(C_B)-E(B))}x_ix_j>0$.
Thus, the contradiction shows that the necessity holds. \end{proof} 

Combining Theorem 4.5 and Lemma 5.1 we get the following result.

\begin{thm}
Suppose $B$ is a block graph of order $n$ with $s$ blocks such that some two cut vertices do not belong to the same block. Then
$$\lambda_1(D(B^c))\le \lambda_1(D(\mathbb{S}^c(s-2,1))).$$
The equality holds if and only if $B\cong \mathbb{S}(s-2,1)$.
\end{thm}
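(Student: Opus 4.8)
The plan is to reduce the block-graph maximization to the clique-tree case already settled in Theorem 4.5, using the completion $B \mapsto C_B$ together with the comparison furnished by Lemma 5.1. First I would form $C_B$ by replacing every block $B^i$ of $B$ with the clique $K^i$ on the same vertex set, exactly as in the construction preceding Lemma 5.1. Since this operation only adds edges internal to blocks, every cut vertex of $B$ survives as a cut vertex of $C_B$, and the block-to-clique correspondence is a bijection; hence $C_B$ is a clique tree of order $n$ with $s$ cliques.

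Next I would check that $C_B$ meets the hypothesis of Theorem 4.5. By assumption $B$ possesses two cut vertices lying in distinct blocks, and under the completion these two vertices belong to two different cliques of $C_B$. Two vertices in different cliques of a clique tree are never adjacent, so $C_B$ has two nonadjacent cut vertices, which is precisely what Theorem 4.5 demands. Granting this, the bound follows by concatenation: Lemma 5.1 gives $\lambda_1(D(B^c)) \le \lambda_1(D(C_B^c))$, and Theorem 4.5 applied to $C_B$ gives $\lambda_1(D(C_B^c)) \le \lambda_1(D(\mathbb{S}^c(s-2,1)))$, whence $\lambda_1(D(B^c)) \le \lambda_1(D(\mathbb{S}^c(s-2,1)))$.

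For the equality characterization, one direction is immediate: if $B \cong \mathbb{S}(s-2,1)$ then $B$ is already a clique tree, so $C_B = B$ and both links in the chain are equalities. Conversely, equality of the two extreme quantities forces equality at each link. The equality clause of Lemma 5.1 then gives $B \cong C_B$, so that $B$ is itself a clique tree; it remains to deduce $C_B \cong \mathbb{S}(s-2,1)$ from $\lambda_1(D(C_B^c)) = \lambda_1(D(\mathbb{S}^c(s-2,1)))$.

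This last implication is the main obstacle, because Theorem 4.5 as recorded supplies only the inequality, not its equality condition. To close the gap I would reopen the proof of Theorem 4.5, where $\mathbb{S}(s-2,1)$ is obtained by relocating cliques onto the cut vertex $w$ of maximum Perron weight; the estimate $x^T A(C_B) x \le x^T A(\mathbb{S}(s-2,1)) x$ is then strict whenever some relocated clique sat at a cut vertex of strictly smaller weight. Hence equality can survive only if no relocation alters the quadratic form, and the delicate point is to exclude the degenerate case in which several cut vertices share the maximal Perron weight while $C_B \not\cong \mathbb{S}(s-2,1)$. I would dispose of it exactly as in Lemma 4.2: applying the eigenvalue equation for the Perron vector of $D(C_B^c)$ at a cut vertex $\widetilde{v} \ne w$ yields $0 = \sum_{u \in K \setminus \{v\}} x(u)$, contradicting the strict positivity of the Perron vector. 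Everything else is routine bookkeeping.
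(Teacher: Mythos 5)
Your proposal follows essentially the same route as the paper, which proves this theorem simply by chaining Lemma 5.1 ($\lambda_1(D(B^c))\le\lambda_1(D(C_B^c))$, with equality iff $B\cong C_B$) with Theorem 4.5 applied to the clique tree $C_B$. Your additional observation is a genuine improvement in rigor: the paper's one-line proof silently uses an equality condition for Theorem 4.5 that is never stated or proved there, and your sketch of how to extract it --- tracking where the relocation argument is strict and ruling out the degenerate tied-weight case via the eigenvalue equation as in Lemma 4.2 --- is exactly the kind of supplement the paper's argument needs.
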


\end{document}